\documentclass[11pt]{article}
\usepackage{indentfirst}
\setlength{\parskip}{3\lineskip}
\usepackage{amsmath,amssymb,amsfonts,amsthm,graphics}
\usepackage{makeidx}
\usepackage{indentfirst,graphics,epsfig,psfrag}
\usepackage{ifpdf}
\usepackage{color}
\setlength{\parindent}{1em}

\setlength{\textwidth}{160mm} \setlength{\textheight}{23cm}
\setlength{\headheight}{0cm} \setlength{\topmargin}{0pt}
\setlength{\headsep}{0pt} \setlength{\oddsidemargin}{0pt}
\setlength{\evensidemargin}{0pt}
\newtheorem{thm}{Theorem}

\newtheorem{lem}{Lemma}[section]
\newtheorem{cla}{Claim}

\theoremstyle{definition}

\def\-{\mbox{--}}

\begin{document}
\title{\Large\bf A note on the $3$-rainbow index of $K_{2,t}$ }
\author{
\small  Tingting Liu, Yumei Hu \footnote{supported by NSFC No. 11001196. }  \\
\small  Department of Mathematics, Tianjin University, Tianjin 300072, P. R. China\\
\small E-mails: ttliu@tju.edu.cn; huyumei@tju.edu.cn;
 }
\date{}
\maketitle
\begin{abstract}
A tree $T$, in an edge-colored  graph $G$, is called {\em a rainbow tree} if no two edges of $T$ are assigned the same color. For a vertex
subset $S\in V(G)$, a tree that connects $S$ in $G$ is called an $S$-tree. A  {\em $k$-rainbow coloring } of $G$ is an edge coloring of $G$ having the property that for every set $S$ of $k$ vertices of $G$, there exists a rainbow $S$-tree $T$ in $G$. The minimum number of colors needed in a $k$-rainbow coloring of $G$ is the {\em $k$-rainbow  index of $G$}, denoted by $rx_k(G)$. In this paper, we obtain the exact values of $rx_3(K_{2,t})$ for
 any $t\geq 1$.

{\flushleft\bf Keywords}: edge-coloring, $k$-rainbow index, rainbow
tree, complete bipartite graph.

\end{abstract}

\section{Introduction}

All graphs considered in this paper are simple, finite and
undirected. We follow the terminology and notation of Bondy and
Murty \cite{bondy2008graph}. Let $G$ be a nontrivial connected graph
of order $n$ on which is defined an edge coloring, where adjacent
edges may be the same color. A path $P$ is a {\em rainbow path} if no two edges of
$P$ are colored the same. The graph $G$ is {\em rainbow connected }if
$G$ contains a $u-v$ rainbow path for every pair $u,v$ of distinct
vertices of $G$. The minimum number of colors that results in a
rainbow connected graph $G$ is the {\em rainbow connection number} $rc(G)$ of $G$.
These concepts were introduced by Chartrand et al. in \cite{Chartrand2008graph}.


Another generalization of rainbow connection number was also introduced by Chartrand et al. \cite{Chartrand2009graph}.
A tree $T$ is a {\em rainbow tree} if no two edges of $T$
are colored the same.  For a vertex
subset $S\in V(G)$, a tree that connects $S$ in $G$ is called an $S$-tree.
 Let $k$ be a fixed integer with $2\leq k\leq n$.
An edge coloring of $G$ is called a {\em k-rainbow coloring} if for every set
$S$ of $k$ vertices of $G$, there exists a rainbow $S$-tree. The {\em k-rainbow index} $rx_k(G)$ of $G$ is the minimum
number of colors needed in a $k$-rainbow  coloring of $G$. It is obvious that
$rc(G)=rx_2(G)$.
It follows, for every nontrivial connected graph $G$ of order $n$, that
$$ rx_2(G)\leq rx_3(G)\leq \cdots \leq rx_k(G).$$

Chakraborty et al. \cite{chakraborty2009hardness} showed
that computing the rainbow connection number of a graph is
NP-hard. Thus, it is more difficult to compute $k$-rainbow index of
general graphs.

For complete bipartite graph, Chartrand et al. \cite{Chartrand2008graph} obtained $rc(K_{s,t})=min\{\sqrt[s]{t},4\}$, for integers s and t with $ 2\leq s \leq t$.
 More results on the rainbow connection number can be found in the survey \cite{LiSun}. For $3$-rainbow index,
Li et al. \cite{Li2013} obtained the exact value of regular complete bipartite $K_{r,r}$, $rx_3(K_{r,r}) = 3$, with $r\geq 3$.

In \cite{Hu2013}, we showed, for any  integers s and t with $3\leq s \leq t$,
$rx_3(K_{s,t})\leq min \{6,s+t-3\}$, and the bound is tight.
 But this bound can not be generalized to the  graph $K_{2,t}$.
 So in the paper, we derive the exact value of  $rx_3(K_{2,t})$ for different $t(t\geq 1)$. We get the following theorem.
\begin{thm}\label{thm1}
For any integer $t\geq 1$,
 \[
      rx_3(K_{2,t})=\left\{
      \begin {array}{lll}

      2, &\mbox{ if $t=1,2$;}\\
      3, &\mbox{ if $t=3,4$;}\\
      4, &\mbox{ if $5\leq t\leq8$;}\\
      5, &\mbox{ if $9\leq t\leq 20$;}\\
      k, &\mbox{ if $(k-1)(k-2)+1\leq t\leq k(k-1)$,~($k\geq 6$);}\\
      \end {array}
      \right.
\]
\end{thm}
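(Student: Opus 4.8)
The plan is to reduce the whole theorem to computing a single quantity $f(k)$, the largest $t$ for which $K_{2,t}$ admits a $3$-rainbow coloring with $k$ colors, and then to read off $rx_3(K_{2,t})=\min\{k:f(k)\ge t\}$. Writing $V=\{v_1,\dots,v_t\}$ for the large side and $\{u_1,u_2\}$ for the small side, a $k$-coloring is exactly an assignment to each $v_i$ of an ordered pair $(a_i,b_i)\in[k]^2$, where $a_i=c(u_1v_i)$ and $b_i=c(u_2v_i)$. Since every edge is incident to $u_1$ or $u_2$, any subtree of $K_{2,t}$ is either a star centred at one $u_j$ or a ``double broom'' in which $u_1$ and $u_2$ are joined through a single bridge vertex with all remaining vertices hanging off $u_1$ or $u_2$ as leaves. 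I would first record this structure and dispatch the easy families of triples: $\{u_1,u_2,v_i\}$ is fine as soon as $a_i\neq b_i$, and $\{u_j,v_i,v_\ell\}$ is fine whenever no loop $(c,c)$ is used. This isolates the only genuine constraint, the triples $\{v_i,v_j,v_\ell\}\subseteq V$.

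The heart of the lower bound is one clean obstruction. If three vertices $v_i,v_j,v_\ell$ each have both of their edges coloured inside a common $2$-set $\{x,y\}$, then in any tree connecting them the (pairwise disjoint, hence at least three) edges incident to these vertices all lie in $\{x,y\}$ and cannot be rainbow; so no $S$-tree exists, \emph{regardless of helper vertices}. Consequently, in a $3$-rainbow $k$-coloring every $2$-set $\{x,y\}$ may contain at most two vertices. Summing this bound over all $\binom{k}{2}$ two-sets, where a non-loop vertex is counted once and a loop vertex $(c,c)$ is counted $k-1$ times, yields $\#\mathrm{nonloop}+(k-1)\#\mathrm{loop}\le k(k-1)$ and hence $t\le k(k-1)$ for every $k\ge 2$, with equality forcing no loops and both orientations of every pair. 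Applied with $k$ replaced by $k-1$, this is exactly the lower threshold $rx_3(K_{2,t})\ge k$ for $t\ge (k-1)(k-2)+1$.

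For the matching upper bound when $k\ge 5$ I would use all $k(k-1)$ distinct non-loop pairs. By the reduction only the triples inside $V$ need checking, and a short case analysis shows that a triple of distinct arcs fails to be connected by a star or by a bridge-at-a-target tree precisely when it is a transitively oriented triangle $\{(x,y),(x,z),(z,y)\}$ on three colours. Such a triangle is rescued by a longer tree whose bridge is a vertex using two colours outside $\{x,y,z\}$; since $k-3\ge 2$ and all pairs are present, such a bridge always exists, and one checks the five edge-colours can be made distinct. This gives $f(k)=k(k-1)$ for $k\ge 5$ (and $k=2$), so $rx_3(K_{2,t})=k$ throughout $(k-1)(k-2)<t\le k(k-1)$ once $k\ge 6$. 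To cover \emph{every} $t$ in the interval rather than just its right endpoint, I would delete the appropriate number of pairs while keeping, for each colour-triple, at least one admissible bridge; this is where a little care is needed, and I regard establishing this robustness (equivalently, the monotonicity $f(k-1)<t\le f(k)$) as a genuine technical point.

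The remaining obstacle, and the part I expect to be hardest, is the small-$k$ regime $k\in\{3,4\}$, where the all-pairs construction breaks down because a transitive triangle can no longer be rescued: rescuing needs a bridge supplying two new colours, impossible once $k\le 4$. Here the transitive-triangle pattern becomes a second, unavoidable obstruction, and (for $k=3$, where even bridge-at-a-target trees are unavailable) one is reduced to connecting every $V$-triple by a star, i.e.\ demanding distinct tails or distinct heads. The values $f(3)=4$ and $f(4)=8$ must then be pinned down as genuine extremal numbers---maximum arc families on $3$, resp.\ $4$, colours avoiding both the confinement obstruction and transitive triangles---together with explicit optimal colorings, and one verifies that loops and repeated pairs cannot do better. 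These exceptional values are exactly what make the thresholds for $k=3,4,5$ differ from the generic formula, and checking the base cases $t\le 8$ by hand is the most labour-intensive step.
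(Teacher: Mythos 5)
Your framework is, at its core, the same as the paper's: your ``confinement obstruction'' is exactly the paper's Claim 2 (at most two vertices may have both edge colors inside a fixed $2$-set of colors), your count over $2$-sets reproduces the bound $t\le k(k-1)$ of Lemmas 5 and 6, and your rescue of a transitive triangle by a bridge vertex carrying two fresh colors is precisely the length-$5$ tree the paper uses to verify its constructions (your characterization of the bad triples as transitive triangles is correct for distinct non-loop codes). The problem is that you defer, rather than prove, the two steps that carry the real weight. First, your confinement count yields $rx_3(K_{2,t})\ge 4$ only for $t\ge 7$ and $rx_3(K_{2,t})\ge 5$ only for $t\ge 13$, whereas the theorem needs these for $t\ge 5$ and $t\ge 9$; that is, you need $f(3)\le 4$ and $f(4)\le 8$, which you only announce ``must be pinned down'' as extremal numbers. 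No argument is given, and this is not a routine verification. The paper's proof of $f(4)\le 8$ rests on a separate idea: split the $4$-color palette into its four $3$-subsets $B_1,\dots,B_4$; note that a triple of vertices whose codes are confined to a $3$-subset can only be connected by a star (a $4$-edge tree uses only edges at those vertices, hence only the $3$ confined colors, and a $5$-edge tree would need $5$ distinct colors), so the rook-type bound caps the number of codes confined to each $B_i$ at $4$; then double count, since every code is confined to at least two of the $B_i$, giving $t\le (4\cdot 4)/2=8$. Your proposed alternative --- an extremal search over arc families on $4$ colors avoiding confinement and transitive triangles, plus a separate handling of loops and repeated arcs --- is finite in principle but is exactly the labour you have not carried out.

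Second, your reduction $rx_3(K_{2,t})=\min\{k: f(k)\ge t\}$ silently assumes that a valid coloring remains valid when vertices are deleted, which is false in general here: deleting vertices can remove precisely the bridge vertices that rescue transitive triangles. You flag this robustness yourself as an unresolved ``technical point,'' but it is needed for every $t$ strictly inside an interval, not just at the right endpoints $t=k(k-1)$. The paper closes it constructively by ordering the codes: the first $\binom{k}{2}$ vertices receive one orientation of every color pair, and since $(k-1)(k-2)+1>\frac{1}{2}k(k-1)$ for $k\ge 6$, every pair is available as a bridge no matter which later codes are omitted; for $k=5$ the first ten codes cover all ten pairs, and in the boundary case $t=9$, where one pair is unavoidably absent, the paper checks by hand that the three codes confined to the complementary $3$-set form a cyclic (hence star-connectable), not transitive, triangle. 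Until you supply (i) the two small-palette upper bounds and (ii) this ordering/robustness argument, your proof covers the lower bounds only for $t$ outside $\{5,6\}\cup\{9,\dots,12\}$ and the upper bounds essentially only at the endpoints $t=k(k-1)$ with $k\ge 5$, so the theorem as stated remains unproved.
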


\section{Proof of Theorem \ref{thm1} }

In this section, we determine the  $3$-rainbow index of  complete bipartite graphs $K_{2,t}$. First of all, we need some new techniques and notions.

Let $U$ and $W$ be the two partite sets of $K_{2,t}$, where $U=\{u_1,~u_2\}, W=\{w_1,~w_2,~\cdots,w_t\}$. Suppose that there exists a
$3$-rainbow coloring $c$ : $E(K_{2,t})$ $\longrightarrow$ $\{1,2,\cdots, k \}$. Corresponding to the  $3$-rainbow coloring, there is a color  code($w$) assigned to every vertex $w\in W$,  consisting of an ordered $2$-tuple ($a_1,~a_2$), where $a_i=c(u_iw)\in\{1,2,\cdots,k\}$ for $i=1,2$.
In turn, for a subset $Y$ of $W$, given color codes of  vertices in $Y$ are  \textit{acceptable} if the corresponding coloring  is $3$-rainbow coloring of the graph induced by $Y\cup U$.
Let $B$ be a set of colors.  Color codes are $B$-\textit{limited} if both colors in every color code, but not necessarily distinct, are
from $B$. The  maximum number of   color codes which are not only $B$-limited but also acceptable
is denoted by $ \beta_{B}$.

Note that we adopt the following thought in the proof: we first give a certain $B$ with $k$ colors, then we consider the the maximum number of   color codes which are not only $B$-limited but also acceptable, the number is the tight upper bound of $t$ with $rx_3(K_{2,t})=k$.

The following claims are easy to verify and will be used later.

\begin{cla}\label{cla1}
If $|B|$=1, then $\beta_{B}\leq 1$.
\end{cla}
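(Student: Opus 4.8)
The plan is to show that when $|B|=1$ an acceptable, $B$-limited family of color codes can contain at most one member. Write $B=\{b\}$. A color code is $B$-limited exactly when both of its entries lie in $B$, and since $B$ consists of the single color $b$, the only $B$-limited color code is $(b,b)$. Hence for every vertex $w\in Y$ both edges $u_1w$ and $u_2w$ receive color $b$; equivalently, \emph{every} edge incident to a vertex of $Y$ is colored $b$.

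Next I would argue by contradiction. Suppose the family were acceptable yet contained two members, so that $Y$ has two distinct vertices $w_1,w_2$, each with code $(b,b)$. I would then exhibit a $3$-set admitting no rainbow tree, namely $S=\{w_1,w_2,u_1\}$. The decisive structural observation is that in $K_{2,t}$ the vertices $w_1$ and $w_2$ lie in the same part $W$ and are therefore non-adjacent, so they share no common incident edge. Consequently any $S$-tree must contain at least one edge at $w_1$ and a \emph{different} edge at $w_2$. Both of these edges are colored $b$, so every $S$-tree carries two edges of the same color and fails to be rainbow. This contradicts the assumption that the coloring restricted to $Y\cup U$ is a $3$-rainbow coloring, i.e.\ that the codes are acceptable.

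Therefore no acceptable $B$-limited family can have two members, which gives $\beta_{B}\le 1$. The only step that requires any care is the structural claim that a tree joining two vertices of the same partite set must spend two distinct edges at those vertices; this is immediate from bipartiteness, since $w_1$ and $w_2$ have no edge in common, and it is exactly the feature that prevents a single color from serving two $W$-vertices simultaneously. I expect this non-adjacency argument to be the crux, though in this base case it is entirely elementary; its real value is as the prototype for the harder counting bounds on $\beta_{B}$ for larger $|B|$.
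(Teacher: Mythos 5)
Your proof is correct, and there is nothing in the paper to diverge from: the paper states this claim without proof (it is covered by the remark that the claims are ``easy to verify''), and your argument is exactly the edge-counting pigeonhole the paper does write out for the analogous Claim 2 --- any tree connecting a $3$-set must use a distinct edge at each chosen $W$-vertex, and all such edges carry the single color $b$. One small remark: the truth is in fact $\beta_B=0$, since even a single vertex $w$ with code $(b,b)$ leaves the set $\{u_1,u_2,w\}$ without a rainbow tree (its only connecting tree is the path $u_1wu_2$, both edges colored $b$); but the bound $\beta_B\le 1$ that you establish is all the claim asserts.
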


\begin{cla}\label{cla2}
If $|B|$=2, then $\beta_{B}\leq 2$.
\end{cla}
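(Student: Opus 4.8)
The plan is to prove the bound directly by contradiction. Suppose $|B|=2$ but $\beta_B\ge 3$; then there is a set $Y\subseteq W$ with $|Y|\ge 3$ whose color codes are $B$-limited and acceptable, i.e.\ the induced coloring on $Y\cup U$ is a $3$-rainbow coloring. Fix any three vertices $w_1,w_2,w_3\in Y$ and apply the defining property of a $3$-rainbow coloring to the set $S=\{w_1,w_2,w_3\}$: there must exist a rainbow $S$-tree $T$ inside the graph induced by $Y\cup U$.

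The key step is to bound the number of edges of any such $T$ from below. Since $W$ is an independent set in $K_{2,t}$, the vertices $w_1,w_2,w_3$ are pairwise nonadjacent, so every connected subgraph containing all three must also contain at least one vertex of $U$. Hence $T$ has at least four vertices and therefore at least three edges; routing through both of $u_1,u_2$, or through further vertices of $Y$ as Steiner points, only increases this count. Consequently any rainbow $S$-tree needs at least three pairwise distinct colors on its edges.

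This yields the contradiction. Every edge of the induced graph joins $U$ to $W$, and because the codes are $B$-limited with $|B|=2$, each such edge receives one of only two colors; thus $T$ can exhibit at most two distinct colors and cannot be rainbow. So no rainbow $S$-tree exists, contradicting the acceptability of $Y$. Therefore $|Y|\le 2$, that is, $\beta_B\le 2$. I expect no real obstacle: the whole content is the elementary observation that connecting three mutually nonadjacent vertices forces a tree on at least four vertices, and the very same edge-counting argument (with the threshold lowered to two edges) underlies Claim~\ref{cla1}.
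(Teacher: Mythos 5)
Your proof is correct and follows essentially the same route as the paper's: both argue by contradiction that a tree connecting three pairwise nonadjacent vertices of $W$ must have at least three edges, all of whose colors lie in $B$, so with $|B|=2$ it cannot be rainbow. Your edge count via ``at least four vertices, hence at least three edges'' is just a slight repackaging of the paper's observation that each of the three vertices needs its own incident edge.
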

\begin{proof}
By contradiction. We may assume $\beta_{B}\geq 3$. For three vertices in $W$,  we can find a rainbow tree containing them. We know the rainbow tree containing them   uses at least an edge adjacent with every vertex of them, thus the tree uses at least three edges whose coloring are from $B$. Since the color codes are $B$-limited and $|B|$=2, the tree is not a rainbow tree, a contradiction.
\end{proof}

\begin{lem}\label{lem1}
For $t=1,2,$ $rx_3(K_{2,t})=2$, and $rx_3(K_{2,t})\geq 3$ for $t\geq 3$.
\end{lem}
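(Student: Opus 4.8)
The plan is to separate the lemma into the two explicit $2$-color constructions for $t=1,2$ and the lower bound $rx_3(K_{2,t})\ge 3$ for $t\ge 3$, while disposing of the universal lower bound $rx_3\ge 2$ once at the start. For the universal bound I would note that any $S$-tree with $|S|=3$ spans at least three vertices and hence contains at least two edges; to be rainbow these two edges must receive distinct colors, so a single color can never yield a $3$-rainbow coloring. Thus $rx_3(K_{2,t})\ge 2$ for every $t\ge 1$, and it remains only to supply matching colorings for $t=1,2$ and to push the bound to $3$ for $t\ge 3$.

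For the upper bounds in the small cases I would exhibit colorings directly. When $t=1$, $K_{2,1}$ is the path $u_1w_1u_2$, and coloring its two edges $1$ and $2$ makes the unique triple $\{u_1,u_2,w_1\}$ span a rainbow path. When $t=2$, $K_{2,2}$ is the $4$-cycle $u_1w_1u_2w_2u_1$, and I would use the alternating coloring $c(u_1w_1)=c(u_2w_2)=1$ and $c(u_2w_1)=c(u_1w_2)=2$; checking the four triples of vertices, in each case the length-two path joining them uses one edge of each color and so is rainbow. Together with the universal lower bound this gives $rx_3(K_{2,t})=2$ for $t=1,2$.

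For $t\ge 3$ I would argue by contradiction using Claim~\ref{cla2}. Suppose $rx_3(K_{2,t})\le 2$ and fix a $3$-rainbow coloring whose color set $B$ satisfies $|B|=2$. Taking $Y=W$, every color code is automatically $B$-limited since all colors lie in $B$, and because the coloring is $3$-rainbow on all of $K_{2,t}$ the codes of the $t$ vertices of $W$ are acceptable. Hence $t\le\beta_{B}\le 2$ by Claim~\ref{cla2}, contradicting $t\ge 3$; therefore $rx_3(K_{2,t})\ge 3$.

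The only step demanding care is the $t=2$ construction, where one must confirm that the relevant rainbow tree for each triple is the direct connecting path rather than a longer Steiner tree spanning all four vertices; since any spanning tree of $C_4$ is a three-edge path and so cannot be rainbow with two colors, the length-two bichromatic paths are exactly what is needed, and the verification is routine. Everything else follows immediately from the claims already established, so I expect no genuine obstacle beyond this bookkeeping.
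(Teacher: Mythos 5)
Your proof is correct and takes essentially the same route as the paper: explicit two-colorings settle $t=1,2$, and Claim~\ref{cla2} gives the lower bound $rx_3(K_{2,t})\geq 3$ for $t\geq 3$. The only difference is one of detail: the paper simply cites that $K_{2,1}$ is a tree and that $rx_3(C_4)=2$, whereas you verify these small cases by hand, which makes the argument more self-contained but not substantively different.
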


\begin{proof}
Since $K_{2,1}$ is a tree, $rx_3(K_{2,1})=2$. For $t=2$,
$rx_3(K_{2,2})=rx_3(C_4)=2$. From the Claim \ref{cla2}, we get if $t\geq 3$, then $rx_3(K_{2,t})\geq 3$.
\end{proof}

The following lemma reminds us how to construct  color codes to some extent and is useful to show that an edge coloring is $3$-rainbow coloring by character of     color codes.
\begin{lem}\label{lem2}
Let $c$ be an edge coloring of $K_{2,t}$ with $rx_3(K_{2,t})=k$ and $S=\{v_1,v_2,v_3\}$ be any a set of three vertices in $K_{2,t}$. We have the following.

$(1)$  $|S\cap W|=3$. When $k=3$, there is a rainbow $S$-tree if and only if there exists $i\in \{1,2\}$ such that
$c(u_iv_j)$ are distinct ($j=1,2,3$); when $k\geq 4$, if there are  at least $4$ colors used by the color codes of three vertices, then there is a rainbow $S$-tree.

$(2)$  $|S\cap W|=2$. If both $i$-th ($i=1,2$) elements of two color codes are  distinct or at least three colors are used, then there is a rainbow $S$-tree.

$(3)$  $|S\cap W|=1$. If  $a_1\neq a_2$ for any color code $(a_1, a_2)$, then there is a rainbow $S$-tree.
\end{lem}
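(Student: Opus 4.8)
The plan is to exploit that $|U|=2$, which sharply limits the shape of any $S$-tree. Since $W$ is independent, every $w\in W$ occurring in an $S$-tree must be joined to $U$, and a vertex of $W$ can serve as an internal (non-terminal) vertex only by being adjacent to \emph{both} $u_1$ and $u_2$ in the tree; thus, up to such redundant connectors, the only internal vertices available are $u_1$ and $u_2$. I would first record, separately in each of the three cases, the resulting short list of admissible minimal trees, and then translate the condition ``the chosen edges get distinct colors'' into a statement about the coordinates of the color codes, writing the code of a vertex $w_i$ as $(x_i,y_i)$ with $x_i=c(u_1w_i)$ and $y_i=c(u_2w_i)$.

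Parts $(3)$ and $(2)$ are then direct. In $(3)$ we have $S=\{u_1,u_2,w\}$, and the path $u_1wu_2$ uses exactly the two edges of the code $(x,y)$ of $w$, so it is a rainbow $S$-tree as soon as $x\neq y$. In $(2)$ we have $S=\{u_j,w,w'\}$ for some $j\in\{1,2\}$, with codes $(x_1,y_1),(x_2,y_2)$; the star at $u_j$ uses the two $j$-th coordinates, so if $x_1\neq x_2$ and $y_1\neq y_2$ the star at $u_j$ is rainbow whichever $j$ occurs. If instead at least three colors appear among $\{x_1,y_1,x_2,y_2\}$, then at most one star can fail: if the star at $u_j$ is rainbow use it, and otherwise it is the unique failing star, say $j=1$ with $x_1=x_2$, whence $x_1,y_1,y_2$ are distinct and the three-edge path $w'u_2wu_1$ (a spanning tree of the $4$-cycle $u_1wu_2w'$) is a rainbow $S$-tree. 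This reduces to a short check over the few equality patterns of the four entries.

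The heart of the lemma is $(1)$, with $S=\{w_1,w_2,w_3\}$. For $k=3$ I would argue by counting: a rainbow tree has at most $k=3$ edges, yet connecting three pairwise non-adjacent vertices needs at least three edges, and a three-edge tree on four vertices connecting them can only be a star at some $u_i$ (any other configuration would force an edge inside $W$). Hence a rainbow $S$-tree exists iff some star is rainbow, i.e.\ iff $c(u_iw_1),c(u_iw_2),c(u_iw_3)$ are distinct for some $i$, which is exactly the stated equivalence. For $k\geq4$ I would show the hypothesis already forces a rainbow tree. If a star is rainbow we are done, so suppose neither is: then $\{x_1,x_2,x_3\}$ and $\{y_1,y_2,y_3\}$ each contain a repetition and hence each has size at most $2$. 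As at least $4$ colors are used, both sets have size exactly $2$ and are disjoint, so after relabelling every code lies in $\{1,2\}\times\{3,4\}$ with all of $1,2,3,4$ occurring.

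It then remains to build one rainbow $S$-tree in this forced configuration, and here I would use a four-edge tree: pick one code as a \emph{connector}, joining its vertex to both $u_1$ and $u_2$ (its two entries are automatically distinct, lying in the disjoint classes $\{1,2\}$ and $\{3,4\}$), and attach the other two vertices as leaves, each to one $u_i$, so that together they contribute the two colors missing from the connector. Since all four colors appear among the three codes, a suitable choice realises $\{1,2,3,4\}$ on the four edges. The one delicate point, which I expect to be the main obstacle, is the configuration in which both colors missing from a chosen connector happen to lie in a single leaf's code; I would resolve it by re-selecting the connector (promoting that leaf), after which a short finite check over the handful of placements of three codes among the four cells of $\{1,2\}\times\{3,4\}$ confirms that a rainbow four-edge tree always exists. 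Everything outside this configuration analysis is routine verification.
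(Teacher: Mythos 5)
Your proposal is correct and takes essentially the same approach as the paper: stars for the easy subcases, the paths $u_1wu_2$ and $w'u_2wu_1$ for parts $(3)$ and $(2)$, and, when no star is rainbow in part $(1)$ with $k\geq 4$, a four-edge tree in which one vertex of $W$ is joined to both $u_1$ and $u_2$ while the other two are leaves. The only difference is organizational: the paper fixes WLOG $c(v_1u_1)=c(v_2u_1)\neq c(v_3u_1)$ and exhibits two explicit candidate trees, whereas you reduce to codes lying in $\{1,2\}\times\{3,4\}$ and finish by a finite check over choices of the connector vertex --- the same construction either way.
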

\begin{proof}
For $(1)$, firstly, when $k=3$, if there exists $i\in \{1,2\}$ such that
$c(u_iv_j)$ are distinct ($j=1,2,3$), then we find a rainbow $S$-tree
$T=\{u_iv_1,~u_iv_2,~u_iv_3\}$. And if there exists no $i\in \{1,2\}$ satisfying above condition, then we need to add at least two other vertices to obtain the rainbow $S$-tree, which implies there are at least four edges in rainbow $S$-tree. It contradicts that $k=3$.
Secondly, for $k\geq 4$, let code($v_1$)=$(c(v_1u_1),~c(v_1u_2))$,  code($v_2$)=$(c(v_2u_1),~c(v_2u_2))$, code($v_3$)=$(c(v_3u_1),~c(v_3u_2))$. If
there exists $i\in \{1,2\}$ such that
$c(u_iv_j)$ are distinct ($j=1,2,3$), the conclusion clearly holds.  And if not,
without loss of generality, assume $c(v_1u_1)= c(v_2u_1)\neq c(v_3u_1)$, then
we can find a rainbow $S$-tree $T=\{v_1u_2,v_2u_1,v_3u_1,v_3u_2\}$ or $T=\{v_1u_1,v_2u_2,v_3u_1,v_3u_2\}$.

For $(2)$, suppose that $v_1=u_1\in U,~v_2=w_1\in W,~v_3=w_2\in W$.
we can easily find a rainbow $S$-tree $T=\{u_1w_1,~u_1w_2\}$ with length $2$ or $T=\{u_1w_1,~w_1u_2,~u_2w_2\}$ with length $3$.

For $(3)$, suppose that $v_1=u_1\in U,~v_2=u_2\in U,~v_3=w_1\in W$. Then the tree $T=\{u_1w_1,~w_1u_2\}$ is a rainbow tree containing $S$.
\end{proof}

\begin{lem}\label{lem3}
For $t=3,4,$ $rx_3(K_{2,t})=3$, and $rx_3(K_{2,t})\geq 4$ for $t\geq 5$.
\end{lem}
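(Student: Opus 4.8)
The plan is to treat the two assertions separately: the exact value $rx_3(K_{2,t})=3$ for $t\in\{3,4\}$, obtained from matching upper and lower bounds, and the strict inequality $rx_3(K_{2,t})\ge 4$ for $t\ge 5$. The lower bound $rx_3(K_{2,t})\ge 3$ needed for the first part is already supplied by Lemma \ref{lem1} (it holds for all $t\ge 3$), so for $t\in\{3,4\}$ it remains only to exhibit an explicit coloring with three colors.

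For the upper bound I would present a $3$-rainbow coloring of $K_{2,4}$ through its color codes, taking $B=\{1,2,3\}$ and assigning the four codes $(1,2),(1,3),(2,1),(3,1)$ to $w_1,\dots,w_4$. To verify this coloring is $3$-rainbow I would run through the three cases of Lemma \ref{lem2}. For $|S\cap W|=3$, every triple of these codes has either its first coordinates or its second coordinates equal to $\{1,2,3\}$ (for instance $\{(1,2),(1,3),(3,1)\}$ has second coordinates $2,3,1$), so Lemma \ref{lem2}(1) applies; for $|S\cap W|=2$ each pair either differs in both coordinates or already uses all three colors, so Lemma \ref{lem2}(2) applies; and for $|S\cap W|=1$ every code is off-diagonal, so Lemma \ref{lem2}(3) applies. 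This yields $rx_3(K_{2,4})\le 3$, hence equality. Since the rainbow trees produced above use only vertices of $U\cup S$, deleting one vertex of $W$ leaves a $3$-rainbow coloring of $K_{2,3}$, giving $rx_3(K_{2,3})=3$ as well.

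For the strict lower bound the key observation is that, with only three colors available, acceptability already fails on the sets $S$ with $|S\cap W|=3$. By the \emph{only if} direction of Lemma \ref{lem2}(1) with $k=3$, a triple $w_a,w_b,w_c$ admits a rainbow tree if and only if their first coordinates are all distinct (so equal to $\{1,2,3\}$) or their second coordinates are. I would first note that the codes must be pairwise distinct, since two equal codes together with any third vertex violate this condition. Thus the codes form a set $A$ of distinct cells of the grid $\{1,2,3\}^2$, and the triple condition becomes: for every $(i,j)\in A$, either row $i$ or column $j$ of $A$ is a singleton. I would then argue that this forces at most one row and at most one column of $A$ to contain two or more cells, because a \emph{big} row of size $\ge 2$ occupies $\ge 2$ columns that must each be singletons, and two big rows would need disjoint pairs of such columns, impossible with only three columns. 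A short case analysis on the number of big rows and big columns then gives $|A|\le 4$, the extremal configuration being, up to symmetry, exactly $\{(1,2),(1,3),(2,1),(3,1)\}$. Consequently no $3$-coloring of $K_{2,t}$ with $t\ge 5$ can be $3$-rainbow: restricting to any five vertices of $W$ would yield five distinct codes satisfying the triple condition, which is impossible. Hence $rx_3(K_{2,t})\ge 4$ for $t\ge 5$.

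The main obstacle is the combinatorial core of the lower bound: correctly reformulating \emph{every triple has a coordinate in which its three codes are distinct} as the grid/singleton condition, and then pinning down the extremal value $4$. Once the reduction to the grid is in place the casework is routine, and the agreement of the extremal set with the construction used for $t=4$ is a reassuring consistency check. I expect no real difficulty in the upper bound beyond the bookkeeping of the three cases of Lemma \ref{lem2}.
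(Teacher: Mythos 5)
Your proposal is correct and follows essentially the same route as the paper: both translate color codes into cells of a $3\times 3$ grid (the paper phrases this as ``isolate rooks'' on a chessboard), show via the only-if direction of Lemma \ref{lem2}(1) that the triple condition forces at most $4$ cells, and use the same extremal configuration $(1,2),(1,3),(2,1),(3,1)$ for the upper bound. Your write-up merely fills in more carefully (via the singleton-row/column reformulation and the big-row/big-column count) the step the paper dismisses as ``easy to verify.''
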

\begin{proof}

First, we show the latter of conclusion that $rx_3(k_{2,t})\geq 4$ for $t\geq 5$.
By contradiction.
We assume there exists $t\geq 5$ such that $rx_3(k_{2,t})=3$ by Lemma \ref{lem1}.

From Lemma \ref{lem2} (1) and (2), if $rx_3(K_{2,t})=3$, then  for any three color codes: code($w_1$), code($w_2$), code($w_3$),
there exists $i\in\{1,2\}$ such that $c(w_1u_i),~c(w_2u_i),~c(w_3u_i)$ are different. Moreover, there is no  same color code in this case.

Now we try to connect the problem to the game of chess. The only fact needed about
the game is that rooks are \textit{isolate} if and only if any three of them lie in the different rows or the different columns of the chessboard. We give each square on the board a pair ($i,j$) of coordinates. The integer $i$ designates the row number of the square and the integer $j$ designates the column number of the square, where $i$ and $j$ are integers between $1$ and $3$. Our concern is the maximum number of rooks which are isolate on the chess   since it is the upper bound of $t$ with $rx_3(K_{2,t})=3$. We consider the condition from two factors:

 (a) if the rooks lie in different rows and columns.

 (b) if two of them lie in the same rows or columns.

It is easy to  verify  that at most $4$ rooks are isolate, such as $(1,2),~(2,1),~(1,3),~(3,1)$ shown in Figure 1, a contradiction. Thus, the conclusion holds.

Second, we give the vertices of $K_{2,3}$ any three color codes shown in Figure 1 (b)  and give the vertices of $K_{2,4}$ all color codes shown in Figure 1 (b). It is easy to check that  corresponding coloring is a $3$-rainbow coloring.
So $rx_3(K_{2,3})=rx_3(K_{2,4})=3$.
\begin{figure}[h,t,b,p]
\begin{center}
\includegraphics[width=6cm]{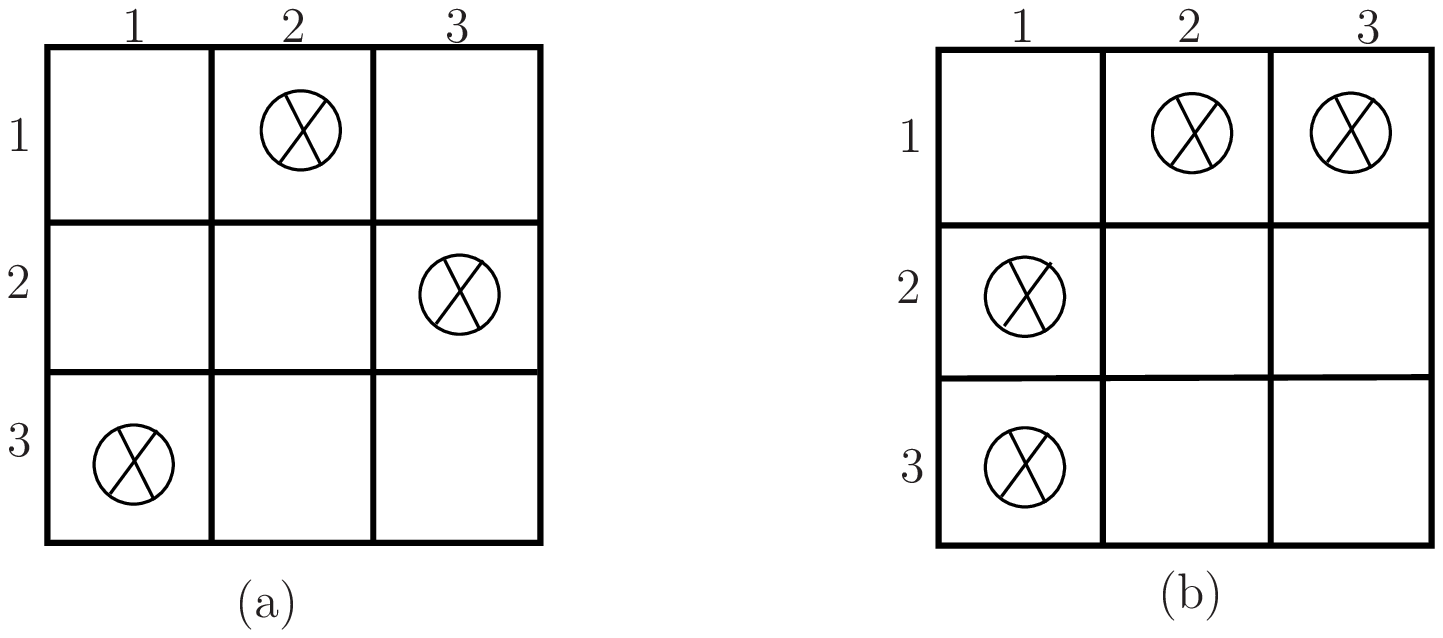}\\
Figure 1: An example of (a) and (b) used in Lemma \ref{lem3}.
\end{center}
\end{figure}
\end{proof}
From the proof of the Lemma \ref{lem3}, the following claim is easily obtained.
\begin{cla}\label{cla3}
If $|B|=3$, then $\beta_B=4$.
\end{cla}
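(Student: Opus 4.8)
The plan is to recognize that, for $|B|=3$, the quantity $\beta_B$ is precisely the number already computed inside the proof of Lemma \ref{lem3}: the maximum number of mutually isolate rooks on a $3\times 3$ chessboard. First I would fix the translation between the two pictures. Since the color codes are $B$-limited with $|B|=3$, I identify each code $(a_1,a_2)$ with a rook in row $a_1$ and column $a_2$. Only three colors are available, so any rainbow $S$-tree on three vertices of $W$ uses at most three edges; hence by the $k=3$ case of Lemma \ref{lem2}(1) such a triple admits a rainbow tree exactly when some index $i\in\{1,2\}$ makes the three $i$-th entries distinct, i.e.\ exactly when the three associated rooks lie in three distinct rows or in three distinct columns. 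Thus the three-$W$-vertex part of acceptability is literally the isolation condition for rooks.

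For the upper bound $\beta_B\le 4$ I would simply invoke the case analysis already carried out in Lemma \ref{lem3}: distinguishing case (a), all rooks in distinct rows and columns, from case (b), two rooks sharing a row or a column, one sees that no fifth isolate rook can be placed, so at most four codes can be simultaneously $B$-limited and acceptable. For the matching lower bound $\beta_B\ge 4$ I would reuse the explicit configuration $(1,2),(2,1),(1,3),(3,1)$ from Figure~1(b), which Lemma \ref{lem3} has already shown yields a $3$-rainbow coloring of $K_{2,4}$; in particular these four codes are pairwise distinct, each satisfies $a_1\ne a_2$ so that Lemma \ref{lem2}(3) disposes of every triple meeting $W$ in one vertex, and every pair of them either differs in both coordinates or spans all three colors so that Lemma \ref{lem2}(2) disposes of every triple meeting $W$ in two vertices. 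Combining the two bounds gives $\beta_B=4$.

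The only genuine obstacle is the upper bound, namely confirming that four isolate rooks are maximal. But this is exactly the chessboard count performed in Lemma \ref{lem3}, so the claim requires essentially no new argument and is obtained as an immediate corollary, as the paper indicates.
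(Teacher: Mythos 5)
Your proposal is correct and follows exactly the route the paper intends: the paper derives Claim~\ref{cla3} directly from the proof of Lemma~\ref{lem3} (the $3\times 3$ rook-isolation count for the upper bound, and the Figure~1(b) configuration $(1,2),(2,1),(1,3),(3,1)$ for the lower bound), which is precisely what you spell out. Your version is in fact more explicit than the paper's one-line justification, since you verify via Lemma~\ref{lem2}(2) and (3) that the four codes remain acceptable for triples meeting $U$, but the substance is the same.
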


\begin{lem}\label{lem4}
For $5\leq t\leq 8$, $rx_3(K_{2,t})=4$, and $rx_3(K_{2,t})\geq 5$ for $t\geq 9$.
\end{lem}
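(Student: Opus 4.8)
The plan is to prove both halves at once by pinning down $\beta_{B}$ for a palette $B$ of $k=4$ colors: I will show $\beta_{B}=8$. Since Lemma~\ref{lem3} already gives $rx_3(K_{2,t})\ge 4$ for $t\ge 5$, an explicit $4$-coloring realizing $8$ acceptable codes will force $rx_3(K_{2,t})=4$ for $5\le t\le 8$, while the bound $\beta_B\le 8$ will mean that $9$ or more vertices of $W$ cannot all receive acceptable, $B$-limited codes, hence $rx_3(K_{2,t})\ge 5$ for $t\ge 9$. Throughout I identify each color code $(a_1,a_2)$ with a cell of the $4\times4$ board $B\times B$, as in the rook picture of Lemma~\ref{lem3}.

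The crux is a reduction that confines all obstructions to three colors. Fix three codes (three vertices of $W$). By Lemma~\ref{lem2}(1), if their six entries use at least $4$ colors there is already a rainbow tree; since only $4$ colors are available, a triple with no rainbow tree --- a \emph{bad} triple --- uses at most $3$ colors. But a tree joining three vertices of $W$ that uses at most $3$ colors cannot have $4$ or more edges (a $4$-edge tree needs $4$ distinct colors), so the only candidate is a $3$-edge star at some $u_i$, which is rainbow exactly when the corresponding coordinate takes three distinct values. Hence a triple is bad iff it uses at most $3$ colors and neither coordinate is all-distinct. Consequently, for every $3$-subset $T\subseteq B$ the codes of an acceptable set that lie in $T\times T$ have the property that every triple among them has an all-distinct coordinate; these are exactly \emph{isolated} rooks on the $3\times 3$ board indexed by $T$, so Lemma~\ref{lem3} gives at most $4$ of them.

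The lower bound $\beta_B\le 8$ then drops out by double counting incidences between codes and the four $3$-subsets of $B$. A code $(i,j)$ with $i\ne j$ lies in $T\times T$ for exactly the $2$ three-subsets $T\supseteq\{i,j\}$, while a diagonal code $(i,i)$ lies in the $3$ subsets $T\ni i$; writing $\mathcal C$ for the (multi)set of codes used, with $o$ off-diagonal and $d$ diagonal entries,
$$2|\mathcal C|\le 2o+3d=\sum_{T}|\mathcal C\cap(T\times T)|\le 4\cdot 4=16,$$
so $|\mathcal C|\le 8$. (Repeats only decrease the per-subset counts, since a triple lying inside some $T\times T$ that contains a repeated cell is bad; thus the bound $4$ per subset is unaffected.) This yields $\beta_B\le 8$ and the claimed $rx_3(K_{2,t})\ge 5$ for $t\ge 9$.

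For the matching construction I would take the eight off-diagonal ``cyclic'' codes $(1,2),(2,3),(3,4),(4,1)$ and $(2,1),(3,2),(4,3),(1,4)$. One checks that each of the four sets $T\times T$ contains exactly four of them forming the extremal isolated configuration of Lemma~\ref{lem3} (e.g. for $T=\{1,2,3\}$ the codes $(1,2),(2,3),(2,1),(3,2)$), so every $\le 3$-color triple is good; every $4$-color triple is good by Lemma~\ref{lem2}(1); and all pairs and singletons are good since the codes are distinct and off-diagonal, via Lemma~\ref{lem2}(2),(3). This gives a $3$-rainbow $4$-coloring of $K_{2,8}$, and restricting to any $t\le 8$ vertices of $W$ shows $rx_3(K_{2,t})\le 4$ for $5\le t\le 8$. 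I expect the main obstacle to be the reduction step: verifying cleanly that every bad triple is trapped inside three colors (so that the $3\times3$ rook bound of Lemma~\ref{lem3} can be reused per $3$-subset), and then choosing the eight codes so that all four $3$-subsets simultaneously attain the extremal $4$-rook configuration, which is precisely what makes the counting bound tight.
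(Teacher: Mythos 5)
Your proposal is correct and takes essentially the same route as the paper: the lower bound via decomposing the $4$-color palette $B$ into its four $3$-subsets, applying the $3\times 3$ rook bound $\beta_{B_i}=4$ (Claim \ref{cla3}) to each, and double counting using the fact that every code lies in at least two of these subsets to get $\beta_B\le 8$; the upper bound via exhibiting $8$ acceptable codes on the $4\times 4$ board (the paper's Figure 2, your explicit cyclic configuration). Your write-up is in fact somewhat more careful than the paper's, since you justify why a bad triple must be confined to three colors (so that Claim \ref{cla3} really applies inside each $T\times T$ despite the fourth color being available in the ambient graph) and you handle repeated codes explicitly.
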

\begin{proof}
Similarly, we first prove the latter of the lemma.
By contradiction.
we may assume that there exists $t\geq 9$  such that $rx_3(k_{2,t})=4$. It follows that $\beta_B\geq 9$.
\begin{figure}[h,t,b,p]
\begin{center}
\includegraphics[width=3cm]{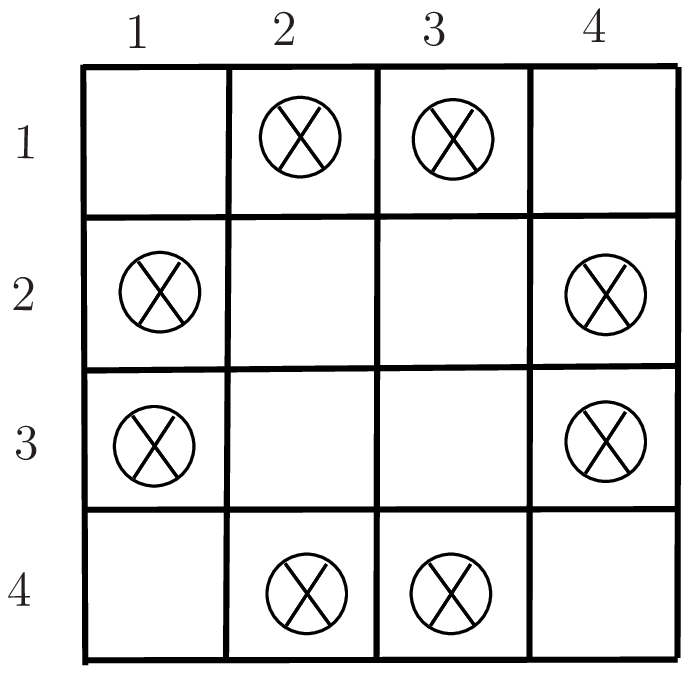}\\
Figure 2: The graph used in Lemma 2.4.
\end{center}
\end{figure}
Let $B=\{1,2,3,4\}$ be a set of 4 colors. Let $B_1=\{1,2,3\},~B_2=\{1,2,4\},~B_3=\{1,3,4\},~B_4=\{2,3,4\}$. Then $|B_i|=3$, so $\beta_{B_i}=4~(i=1,2,3,4$).  Since $B$ is the union of four $B_i~(i=1,2,3,4)$,   thus $\beta_B \leq 16$. And we find that a color code is limited in at least two $B_i~(i=1,2,3,4)$. So we get $\beta_B\leq 8$, a contradiction.

Then, we will get eight color codes such that the corresponding coloring is $3$-rainbow coloring.
We can seek eight rooks on the $4$-by-$4$ board, shown in Figure $2$.
By the Lemma \ref{lem2}, for any $t$ ($5\leq t \leq 8 $) rooks in Figure $2$,
we can find  a $3$-rainbow coloring of $K_{2,t}$.
Thus $rx_3(K_{2,t})=4$,~($5\leq t\leq 8$).
\end{proof}

\begin{lem}\label{lem5}
For $9\leq t\leq 20$, $rx_3(K_{2,t})=5$, and $rx_3(K_{2,t})\geq 6$ for $t\geq 21$.
\end{lem}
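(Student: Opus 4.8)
The plan is to prove the sharp value $\beta_B=20$ for a palette $B$ with $|B|=5$, from which both halves of the lemma follow at once. The construction side gives $\beta_B\ge 20$, hence a $3$-rainbow $5$-coloring of $K_{2,t}$ for every $t\le 20$ (so $rx_3\le 5$, and with Lemma \ref{lem4} equality on $9\le t\le 20$); the bound $\beta_B\le 20$ forces $rx_3\ge 6$ as soon as $t\ge 21$.

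For the lower bound $\beta_B\le 20$ I would first record the single structural fact that does all the work: if three vertices of $W$ carry codes lying in $\{a,b\}\times\{a,b\}$ for one pair of colors, they admit no rainbow $S$-tree. Any tree joining three vertices of $W$ uses at least one edge at each of them, hence at least three edges, all colored from $\{a,b\}$, so two coincide; this is exactly the mechanism of Claim \ref{cla2}, and it cannot be repaired by recruiting further vertices. Thus in any acceptable $B$-limited family at most two codes are $\{a,b\}$-limited, for each of the $\binom{5}{2}=10$ two-color subpalettes. I would then sum these ten inequalities. A code $(a_1,a_2)$ with $a_1\ne a_2$ is $\{a,b\}$-limited for exactly one pair, while $(a,a)$ is $\{a,c\}$-limited for all four $c\ne a$; writing $p$ and $q$ for the numbers of off-diagonal and diagonal codes used, the ten bounds add to $p+4q\le 10\cdot 2=20$. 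Since $t=p+q\le p+4q\le 20$, we obtain $t\le 20$.

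The subtle point, and the reason this lemma is genuinely harder than Lemma \ref{lem4}, is the choice of subpalette size. In Lemma \ref{lem4} the palette of size $4$ was split into three-color subpalettes with Claim \ref{cla3}; that is legitimate there only because with four colors a three-color-limited bad triple has no rainbow tree at all (a spanning tree on $u_1,u_2,v_1,v_2,v_3$ needs four distinct colors, and there is no room for a helper vertex, which would force a fifth). With five colors this fails: such a bad triple can be rescued by a five-edge tree through an auxiliary vertex whose code uses the two colors outside the triple, so $\beta$ for a three-color subpalette is \emph{not} a valid bound on a sub-family, and the naive three-subpalette count returns the wrong value. The remedy is to descend all the way to two-color subpalettes, where the obstruction above is helper-proof for every $k$. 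I expect recognizing this as the correct decomposition to be the main hurdle.

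For the upper bound I would take the $t=20$ coloring whose codes are exactly the twenty off-diagonal pairs of $B$ and verify acceptability through Lemma \ref{lem2}. Triples meeting $U$ are immediate: every code has $a_1\ne a_2$, so Lemma \ref{lem2}(3) settles $|S\cap W|=1$, and for $|S\cap W|=2$ two distinct off-diagonal codes either differ in a coordinate (a rainbow two-edge star) or share it and so expose three colors, whence Lemma \ref{lem2}(2) applies. For $|S\cap W|=3$, three distinct off-diagonal codes use at least three colors: if at least four, Lemma \ref{lem2}(1) finishes, and if some coordinate is all-distinct a rainbow star works. The only remaining case is a triple in exactly three colors $\{a,b,c\}$ with both coordinates repeating; here I would exhibit a five-edge rainbow tree attaching the three vertices by a system of distinct representatives onto $\{a,b,c\}$ (which exists since the three off-diagonal codes satisfy Hall's condition) and joining $u_1,u_2$ through the helper vertex coded by the two leftover colors, present because all off-diagonal codes are used. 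Finally, for $9\le t<20$ I would choose a $t$-element subfamily that still retains, for each realized three-color bad triple, one helper code on the complementary pair; with the matching bound $rx_3\ge 5$ from Lemma \ref{lem4}, equality holds throughout.
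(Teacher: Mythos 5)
Your proposal is correct and takes essentially the same approach as the paper: the lower bound is Claim \ref{cla2} summed over the ten two-color subpalettes (the paper's $t\le C_5^2\times 2=20$), and the upper bound is the same construction from off-diagonal codes, with the three-color bad triples rescued by a five-edge tree through a helper vertex carrying the complementary pair of colors. Your refinements---the explicit $p+4q\le 20$ double count of diagonal codes, the Hall/SDR justification of the five-edge tree, and the explanation of why one must descend to two-color rather than three-color subpalettes---merely make rigorous the steps the paper disposes of by citing Claim \ref{cla2} and its figures.
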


\begin{proof}

From the Claim $2$, we know $t\leq C_5^2\times 2=5\times4=20$, if $rx_3(k_{2,t})=5$. That is, $rx_3(k_{2,t})\geq 6$ for $t\geq 21$.

Next, we give $t$ vertices  $t$ color codes $(9\leq t\leq 20)$  and the corresponding coloring is $3$-rainbow coloring.
When $9\leq t\leq 10$, we just give $t$ vertices the first $t$ codes successively: (1,2),~(2,3),~(3,4),~(4,5),
~(3,1),
~(4,2),~(5,3),~(1,4),~(2,5),~(5,1) (see Figure 3). When $11\leq t\leq20$, we choose randomly  $t-10$ color codes from the remaining color codes in Figure 4(a) to give the $t-10$ vertices.

Then, it remains to show  the coloring is $3$-rainbow coloring. Let $S$ be a set of three vertices. By the Lemma \ref{lem2}, we can find a rainbow $S$-tree with the exception of the case:
$|S\cap W|=3$ and the only $3$ different colors used by the color codes of $S$.  Note that $3$ colors used by the color codes of $S$ may be allowed in this case. But there must exist a color code consisted of other two distinct colors. Thus we will find a rainbow $S$-tree with length $5$, for example see Figure 3.
 \begin{figure}[h,t,b,p]
\begin{center}
\includegraphics[width=3cm]{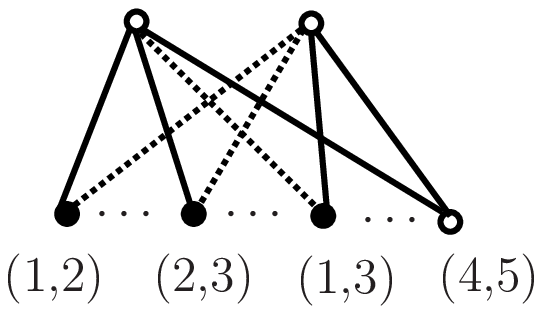}\\
Figure 3.
\end{center}
\end{figure}
When $t\geq 10$, if $3$ different colors used by the color codes of $S$, there must be a color code consisted of other two distinct colors appearing in the first $10$ color codes of $K_{2,t}$ by the strategy of coloring.  When $t=9$, we only to check the subcase that color codes of $S$ are limited in \{2,3,4\}. It is easy to verify the fact there is a rainbow tree connecting $S$, which correspond to the color codes $(2,3),~(3,4),~(4,2)$, respectively. So
the coloring is $3$-rainbow coloring. That is, for $9\leq t\leq 20$, $rx_3(k_{2,t})\leq 5$. With the aid of Lemma \ref{lem4}, we get $rx_3(k_{2,t})=5$, $9\leq t\leq 20$.
\begin{figure}[h,t,b,p]
\begin{center}
\includegraphics[width=10cm]{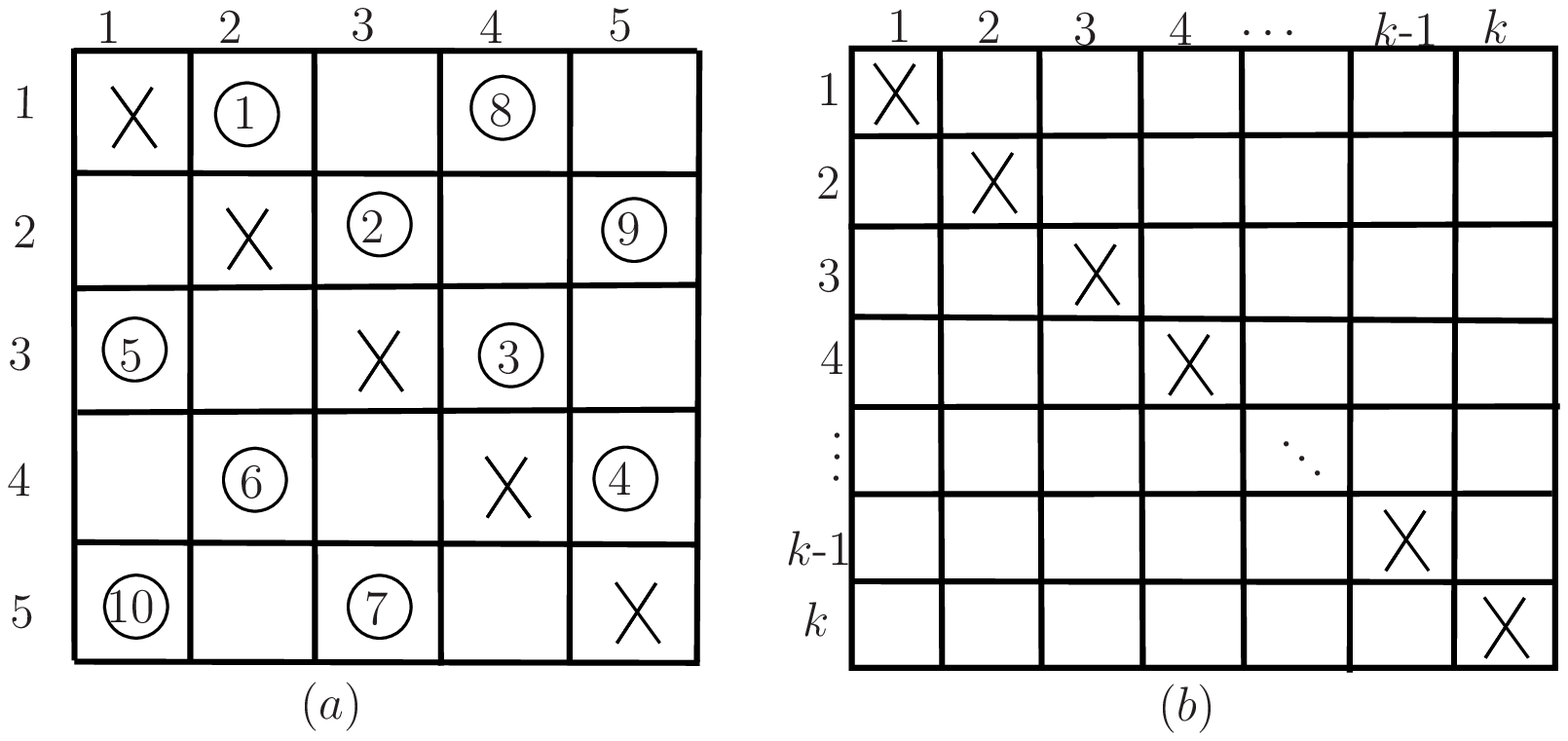}\\
Figure 4: The graph (a) used in Lemma 2.5 and the graph (b)used  in Lemma 2.6.
\end{center}
\end{figure}
\end{proof}
\begin{lem}\label{lem6}
For $(k-1)(k-2)+1\leq t\leq k(k-1)$, $rx_3(K_{2,t})=k$, $k\geq 6$.
\end{lem}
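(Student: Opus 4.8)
The plan is to establish the two inequalities $rx_3(K_{2,t})\ge k$ for $t\ge (k-1)(k-2)+1$ and $rx_3(K_{2,t})\le k$ for $t\le k(k-1)$ separately, working throughout with color codes as in Lemmas \ref{lem4} and \ref{lem5}.

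For the lower bound I would show $\beta_B\le (k-1)(k-2)$ whenever $|B|=k-1$, which immediately forces $rx_3(K_{2,t})\ge k$ once $t>(k-1)(k-2)$. Fix an acceptable $B$-limited family with $|B|=k-1=:m$, let $E$ be the number of its diagonal codes $(a,a)$ and $N$ the number of off-diagonal ones. By Claim \ref{cla1} each diagonal color occurs at most once, and by Claim \ref{cla2} at most two codes of the family are $\{i,j\}$-limited for each unordered pair $\{i,j\}$---this is a local obstruction, since three vertices with codes in a two-color set already force two equal edges in any tree joining them. Summing this bound over all $\binom{m}{2}$ pairs, and noting that an off-diagonal code is counted once while a diagonal code $(a,a)$ is counted in the $m-1$ pairs containing $a$, gives $N+(m-1)E\le m(m-1)$, whence $t=E+N\le m(m-1)-(m-2)E\le (k-1)(k-2)$. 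This is the only step that genuinely uses $k\ge 6$: for $m\le 4$ the triples inside $W$ impose extra constraints that push $\beta_B$ strictly below $m(m-1)$, which is exactly why the small cases in Lemmas \ref{lem4}--\ref{lem5} enjoy wider $t$-ranges.

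For the upper bound I would color $K_{2,t}$ by assigning $t$ distinct codes, all with distinct entries, chosen from the $k(k-1)$ ordered pairs $(a_1,a_2)$ with $a_1\ne a_2$. Parts $(2)$ and $(3)$ of Lemma \ref{lem2} then dispatch every triple meeting $U$, and any three distinct codes inside $W$ necessarily use at least three colors, since a two-color palette carries only the two codes $(a,b)$ and $(b,a)$. If such a triple uses four or more colors, part $(1)$ of Lemma \ref{lem2} supplies a rainbow tree; if it uses exactly three colors with some column rainbow, the corresponding $3$-edge star works. The sole remaining case is a triple on three colors $\{a,b,c\}$ with no rainbow column, and here a Hall/system-of-distinct-representatives argument yields a selection of one edge per vertex realizing the colors $a,b,c$ and necessarily using both $u_1$ and $u_2$; any \emph{helper} code on two colors disjoint from $\{a,b,c\}$ then lets me adjoin the edges $u_1w'$ and $u_2w'$ to merge the two stars into a rainbow $S$-tree of length $5$ (as in Figure 3).

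It remains to guarantee a helper for every three-color set that actually occurs among the chosen codes. At the lower end $t=(k-1)(k-2)+1$ only $k(k-1)-[(k-1)(k-2)+1]=2k-3$ codes are omitted, whereas stranding a fixed color triple would require deleting all $(k-3)(k-4)$ codes supported on its three complementary colors; since $(k-3)(k-4)>2k-3$ for $k\ge 7$, no triple can ever lose all its helpers, so \emph{any} choice of $t\ge (k-1)(k-2)+1$ distinct-entry codes is already acceptable. The lone exceptional value $k=6$, where $2k-3=9$ exceeds $(k-3)(k-4)=6$, I would handle by a single explicit deletion pattern (or an explicit circulant list in the style of Lemma \ref{lem5}) that keeps a representative on every complementary pair. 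I expect this helper bookkeeping---certifying the length-$5$ trees uniformly across the range, and in particular clearing the boundary case $k=6$---to be the main obstacle; the lower bound, by contrast, is a one-line consequence of Claims \ref{cla1} and \ref{cla2}.
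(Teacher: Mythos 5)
Your proposal is correct, and its skeleton coincides with the paper's: the lower bound comes from the per-pair counting supplied by Claims \ref{cla1} and \ref{cla2} (the paper states the same count, $2\binom{m}{2}=m(m-1)$ acceptable codes on $m$ colors, a bit more loosely than your diagonal/off-diagonal bookkeeping), and the upper bound assigns distinct off-diagonal codes and resolves the one hard case---three codes on exactly three colors with no rainbow column---by a length-$5$ tree through a helper vertex whose code uses two fresh colors. Where you genuinely diverge is in how the helper's existence is secured. The paper fixes the assignment: since $t>\frac{1}{2}k(k-1)$ for $k\geq 6$, it gives the first $\binom{k}{2}$ vertices all upper-triangle codes, so every unordered pair of colors is represented and a helper is available for every triple, uniformly in $k$. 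You instead let the $t$ distinct-entry codes be arbitrary and compare the number of omitted codes ($\leq 2k-3$) with the number of potential helpers per triple ($(k-3)(k-4)$), concluding that for $k\geq 7$ \emph{any} choice of codes is acceptable---a stronger statement than the paper proves---at the price of a residual case $k=6$, which you then settle by exactly the paper's device (retain one orientation of every pair; omitting at most one code per pair is possible since only $9\leq\binom{6}{2}$ codes are dropped). Your SDR/Hall argument also makes explicit the tree construction that the paper dismisses with ``similar to the proof of Lemma \ref{lem5}.'' The only soft spot is that the $k=6$ pattern is sketched rather than written out, but since it is precisely the paper's upper-triangle construction, nothing is missing in substance.
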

\begin{proof}

By the Claim $2$, if $rx_3(K_{2,t})=k$, then it has at most $C_k^2 \times 2=k(k-1)$  acceptable color codes. Thus when $t\geq k(k-1)+1$, $rx_3(K_{2,t})\geq k+1$. Similarly, when $t\geq (k-1)(k-2)+1$, $rx_3(K_{2,t})\geq k$.

Now we give a $3$-rainbow coloring of $K_{2,t}$ ($(k-1)(k-2)+1\leq t\leq k(k-1)$) with $k$ colors.  When $k\geq 6$, $(k-1)(k-2)+1 >\frac{1}{2} k(k-1)$, thus $t> \frac{1}{2}k(k-1) $.
We randomly give the first $\frac{1}{2}k(k-1)$ vertices $\frac{1}{2}k(k-1)$ color codes in
upper triangle of the chessboard, see Figure 4(b). Then the other $t-\frac{1}{2}k(k-1)$ vertices are received any $t-\frac{1}{2}k(k-1)$  remaining color codes in Figure 4(b).

Next we show this kind of coloring is $3$-rainbow coloring. Let $S$ be the set of three vertices. By Lemma \ref{lem2}, we only to check the case : $|S\cap W|=3$ and only $3$ different colors used by the color codes of three vertices. Similar to the proof of Lemma \ref{lem5}, we need to find  a color code consisted of other two distinct colors to construct a rainbow $S$-tree. Since the combinations of any two colors  have appeared  in first $k(k-1)/2$  color codes,  we can easily find such a color code. Hence, in any case, there is a rainbow $S$-tree with length at most $5$. That is, $(k-1)(k-2)+1\leq t\leq k(k-1)$, $rx_3(K_{2,t})\leq k$. So $rx_3(K_{2,t})= k$ for $(k-1)(k-2)+1\leq t\leq k(k-1)$.
\end{proof}
Now we complete the proof of Theorem 1.

\end{document}